\theoremstyle{plain}
\newtheorem{theorem}{Theorem}
\newtheorem{corollary}{Corollary}
\newtheorem{lemma}{Lemma}
\theoremstyle{definition}
\newtheorem{definition}{Definition}
\theoremstyle{plain}
\theoremstyle{remark}
\newtheorem{remark}{Remark}
\newcommand{\mE}{\ensuremath{\mathbb{E}}}
\newcommand{\mF}{\ensuremath{\mathbf{F}}}
\newcommand{\mP}{\ensuremath{\mathbb{P}}}
\newcommand\gC{{\mathcal{C}}}
\newcommand\gE{{\mathcal{E}}}
\newcommand\gL{{\mathcal{L}}}
\newcommand\gX{{\mathcal{X}}}
\newcommand\gY{{\mathcal{Y}}}
\Crefname{equation}{}{}
\crefname{equation}{}{}
\def\BibTeX{{\rm B\kern-.05em{\sc i\kern-.025em b}\kern-.08em
    T\kern-.1667em\lower.7ex\hbox{E}\kern-.125emX}}
\newcommand{\ie}{\textit{i.e., }}
\newcommand{\eg}{\textit{e.g., }}
\newcommand{\R}{\mathbb R}    
\newcommand\eqnumber{\addtocounter{equation}{1}\tag{\theequation}}
\DeclareMathOperator*{\argmin}{arg\,min}
\newcommand{\interior}{\text{int}\,}
\newcommand{\C}{\mathcal{C}}  
\newcommand{\exit}{\Gamma}
\newcommand{\minf}{\Phi}
\newcommand{\entrance}{\Psi}
\newcommand{\maxf}{\Theta}
\newcommand{\sprob}{\mF}
\newcommand{\varsprob}{\tilde{\mF}}
\newcommand{\dimw}{\omega}
\newcommand{\Hess}[1]{\operatorname{Hess}{#1}}
\newcommand{\grad}[1]{\nabla{#1}}
\newcommand{\tr}{\operatorname{tr}}
\newcommand{\addmodfunc}{\kappa}
\newcommand{\varmargin}{L}
\title{\LARGE \bf
Myopically Verifiable Probabilistic Certificate for Long-term Safety
}
\author{Zhuoyuan Wang$^{1\dagger}$, Haoming Jing$^{1\dagger}$, Christian Kurniawan$^{1}$, Albert Chern$^{2}$, Yorie Nakahira$^{1*}$
\thanks{*This work is supported by JST, PRESTO Grant Number JPMJPR2136, Japan.}
\thanks{$^\dagger$These authors contributed equally.}
\thanks{$^{1}$Zhuoyuan Wang, Haoming Jing, Christian Kurniawan and Yorie Nakahira are with the Department of Electrical and Computer Engineering, Carnegie Mellon Universty,
        {\tt\small \{zhuoyuaw,haomingj,ckurniaw,
        ynakahir\}@andrew.cmu.edu}.}%
\thanks{$^{2}$Albert Chern is with the Department of Computer Science and Engineering, University of California San Diego, {\tt\small alchern@ucsd.edu}.}%
\thanks{$*$To whom correspondence should be addressed.}
}
\begin{document}

\maketitle
\thispagestyle{empty}
\pagestyle{empty}

\begin{abstract}

In this paper, we consider the use of barrier function-based approaches for the safe control problem in stochastic systems. With the presence of stochastic uncertainties, a myopic controller that ensures safe probability in infinitesimal time intervals may allow the accumulation of unsafe probability over time and result in a small long-term safe probability. Meanwhile, increasing the outlook time horizon may lead to significant computation burdens and delayed reactions, which also compromises safety. To tackle this challenge, we define a new notion of forward invariance on ‘probability space’ as opposed to the safe regions on state space. This new notion allows the long-term safe probability to be framed into a forward invariance condition, which can be efficiently evaluated. We build upon this safety condition to propose a controller that works myopically yet can guarantee long-term safe probability or fast recovery probability. The proposed controller ensures the safe probability does not decrease over time and allows the designers to directly specify safe probability. The performance of the proposed controller is then evaluated in numerical simulations. Finally, we show that this framework can also be adapted to characterize the speed and probability of forward convergent behaviors, which can be of use to finite-time Lyapunov analysis in stochastic systems. 


\end{abstract}

\section{Introduction}
\label{S:Introduction}

Autonomous systems (\eg robots and self-driving vehicles) must make safe control decisions in real-time and in the presence of various uncertainties. The control of such safety- and delay-critical systems relies extensively on barrier function-based approaches. Barrier function-based approaches can provide provable safety with low computation cost within deterministic systems that possess small and bounded noise due to the two features stated below~\cite{ames2019control,khatib1986real,prajna2007framework}: computation efficiency arising from a myopic controller (feature 1) and from the use of analytical/affine safety conditions (feature 2). However, these two features did not necessarily translate to stochastic systems whose uncertainty is captured by random variables with unbounded support, as we will discuss below. In this paper, we overcome this difficulty by characterizing a sufficient condition for ‘invariance’ in the probability space. This condition is then used to guarantee the unsafe probability to below the tolerable levels without the loss of these two features. 

\textit{Feature 1: Computation efficiency arising from a myopic controller.} In a deterministic system, safety can be guaranteed if the state never moves outside the safe set within an infinitesimal outlook time interval. This property allows a myopic controller, which only evaluates the infinitesimal outlook time interval (immediate future time), to keep the system safe at all times. A myopic evaluation requires much less computation than methods that evaluate a long time horizon since the computational load to evaluate possible future trajectories significantly increases with the outlook time horizon. 

In a stochastic whose uncertainty has unbounded support, however, the probability of staying within the safe set in the infinitesimal outlook time interval is strictly less than one. In other words, there will always be a non-zero tail probability to move outside of the safe set. This tail probability can accumulate over time and result in a small long-term safe probability. This suggests the need for a more refined \textit{temporal} characterization of long-term safe/unsafe probabilities. 

\textit{Feature 2: Computation efficiency arising from the use of analytical/affine safety conditions.} In a deterministic system, the condition for the state to stay within the safe set in an infinitesimal time can be translated as requiring the vector field of the state stays within the tangent cone of the safe set~\cite{Blanchini1999}. A sufficient condition of this requirement is expressed using analytic inequalities that are affine in the control action and thereby can be can be integrated into quadratic programs (see~\cite{ames2016control} and references therein). 

In a stochastic system, however, constraining the mean trajectory to satisfy this condition, without bounding the higher moments, does not give us control over the tail probability of the state moving outside of the safe region. This suggests the need for a more refined \textit{spatial} characterization of unsafe behaviors and state distribution. 

Therefore, ensuring safety in a stochastic system needs more refined temporal and spatial characterization of safe/unsafe behaviors during a long outlook time interval. However, the former requires tracing the long-term evolution of complex dynamics, environmental changes, control actions, as well as their couplings. While the latter requires characterization of the state distribution, tails, and conditional value at risk. Both compromise the above two features and can impose a significant computational burden. Such heavy computation can compromises safety due to slower response, despite the use of more optimized actions.

Prior work has yielded diverse approaches for finer time/space characterization in stochastic systems, but all wrestle with this important safety/reaction time tradeoff. We approximately classify these approaches into three main types based on their choice of tradeoffs: long-term safety with heavy computation (approach A); myopic safety with low computation (approach B) ; and long-term conservative safety with low computation (approach C).

\subsection{Related Work}

\textit{Approach A: long-term safety with heavy computation.} There exists extensive literature that considers a long time horizon and/or the state distribution (or higher moments of the state distribution) at the expense of high computation costs. For example, various model predictive control (MPC) and chance-constrained optimization include safety constraints in a long time horizon (see~\cite{Farina2016,Hewing2020} and references therein). Reachability-based techniques use the characterization of reachable states over a finite/infinite time horizon to constrain the control action so that the state reaches or avoids certain regions~\cite{Chen2018}. Within barrier function-based approaches, the safety condition can be formulated as constraints on the control action that involve the conditional value-at-risk (CVaR) of the barrier function values~\cite{ahmadi2020risk}. While these techniques can find more optimal control actions that are safe in the long term, they often come with significant computation costs. The cause is twofold: first, possible trajectories often scale exponentially with the length of the outlook time horizon; and second, tails or CVaR involve the probability and mean of rare events, which are more challenging to estimate than nominal events. Such stringent tradeoffs between estimating longer-term safe probability vs. computation burden limit the utility of these techniques in delay-critical systems for more expansive (longer time scale or precise characterization of the state distribution) control action evaluation.

\subsubsection*{Approach B: myopic safety with low computation.} Motivated by the latency requirement in real-time safety-critical control, a few approaches use myopic controllers that constrain the probability of unsafe events in an infinitesimal time interval. For example, the stochastic control barrier function use a sufficient condition for ensuring that the state, on average, moves within the tangent cone of the safe set~\cite{clark2019control}. The probabilistic barrier certificate ensures certain conditions of the barrier functions to be satisfied with high probability~\cite{luo2019multi}. The myopic nature of these methods achieves a significant reduction in computational cost but can result in unsafe behaviors in a longer time horizon due to the accumulation of tail probabilities of unsafe events.

\subsubsection*{Approach C: long-term conservative safety with low computation.} To have a faster response but still achieve longer-term safety, other approaches use probability and/or martingale inequalities to derive sufficient conditions for constraining the evolution of barrier function values in a given time interval~\cite{prajna2007framework,yaghoubi2020risk,santoyo2021barrier}. These sufficient conditions are given analytically and are elegantly integrated into the convex optimization problems to synthesize controllers offline or verify control actions online. The controllers based on these techniques often require less online computation to find the action that guarantees longer-term safety. However, due to the approximate nature of the probabilistic inequalities, the control actions can be conservative and unnecessarily compromise nominal performance.

\subsection*{Contribution of this paper}

In this paper, we propose an efficient algorithm that ensures safety during a fixed or receding time horizon. The algorithm is based on a new safety condition that is sufficient to control the unsafe probability in a given time interval to stay above the tolerable risk levels.\footnote{Here, we consider two types of unsafe probability: the probability of exiting the safe set in a time interval when originated inside and the probability of recovering to the safe set when originated outside.} This safety condition is constructed by translating probabilistic safety specifications into a forward invariance condition on the level sets of the safe probability. The use of forward invariance allows safety at all time points to be guaranteed by a myopic controller that only evaluates the state evolution in an infinitesimal future time interval. Moreover, the sufficient condition is affine to the control action and can be used in convex/quadratic programs. The parameters of the sufficient condition are determined from the safe probability, its gradient, and its hessian. These values satisfy certain deterministic convection-diffusion equations (CDEs), which characterize the boundary conditions and the relationship between the safe probabilities of neighboring initial conditions and time horizons. These CDEs can be combined with the Monte Carlo (MC) method to improve the accuracy and efficiency in computing these values.

Below, we summarize the advantages of the proposed algorithms. 

\subsubsection*{Advantage 1} Computation efficiency. The proposed method only myopically evaluates the immediate future using closed-form safety constraints. Thus, it can have reduced computational burdens than approach A. 

\subsubsection*{Advantage 2} Provable guarantee in long-term safe probability. The closed-form safety constraints are derived from the safe probability during a receding or fixed time horizon. Thus, the proposed method can have more direct control over the probability of accumulating tail events than approach B. 

\subsubsection*{Advantage 3} Intuitive parameter tuning using exact safety vs. performance tradeoffs. The proposed method uses exact characterizations of safe probability. Thus, it allows the aggressiveness towards safety to be directly tuned based on the exact probability, as opposed to probabilistic bounds or martingale approximations used in approach C. Moreover, our framework may be useful in characterizing the speed and probability of forward convergence in finite-time Lyapunov analysis of stochastic systems.

\section{Preliminary}
\label{S:Preliminary}

Let $\R$, $\R_+$, $\R^n$, and $\R^{m\times n}$ be the set of real numbers, the set of non-negative real numbers, the set of $n$-dimensional real vectors, and the set of $m \times n$ real matrices, respectively. Let $x[k]$ be the $k$-th element of vector $x$. Let $f:\gX \rightarrow \gY$ represent that $f$ is a mapping from space $\gX$ to space $\gY$. Let \(\mathbb{1}\{\gE\}\) be an indicator function, which takes \(1\) when condition \(\gE\) holds and \(0\) otherwise. Let $\mathbf{0}_{m\times n}$ be an $m\times n$ matrix with all entries $0$. Given events $\gE$ and $\gE_c$, let $\mP(\gE)$ be the probability of $\gE$ and $\mP(\gE | \gE_c)$ be the conditional probability of $\gE$ given the occurrence of $\gE_c$. Given random variables $X$ and $Y$, let $\mE[X]$ be the expectation of $X$ and $\mE[X | Y = y]$ be the conditional expectation of $X$ given $Y=y$. We use upper-case letters (\eg $Y$) to denote random variables and lower-case letters (\eg $y$) to denote their specific realizations.

\begin{definition}[Infinitesimal Generator]
    The infinitesimal generator $A$ of a stochastic process $\{Y_t\in\R^n\}_{t\in\R_+}$ is
    \begin{align}
    \label{eq:afy}
        AF(y)=\lim_{h\to 0}\frac{\mE[F(Y_h)| Y_0 = y]-F(y)}{h}
    \end{align}
    whose domain is the set of all functions $F:\R^n\rightarrow\R$ such that the limit of \eqref{eq:afy} exists for all $y\in\R^n$.
\end{definition}

\section{Problem Statement}
\label{S:Problem_Statement}

Here, we introduce the control system in subsection~\ref{sec:SystemDescription}, define the measures to characterize two types of safety in subsection~\ref{SS:Characterization of Safe Behaviours}, and state the controller design goals in subsection~\ref{SS:Safety_Specification}. 

\subsection{Control System Description} 
\label{sec:SystemDescription}
 
We consider a time-invariant stochastic control and dynamical system. The system dynamics is given by the stochastic differential equation (SDE)
\begin{align}
\label{eq:x_trajectory}
    dX_t = \left(f(X_t) + g(X_t)U_t\right) dt + \sigma(X_t) dW_t,
\end{align}
where $X_t \in \R^n$ is the system state, $U_t \in \R^m$ is the control input, and $W_t \in \R^\dimw$ captures the system uncertainties. Here, $X_t$ can include both the controllable states of the system and the uncontrollable environmental variables such as moving obstacles. We assume that $W_t$ is the standard Wiener process with $0$ initial value, \ie $W_0=0$. The value of $\sigma(X_t)$ is determined based on the size of uncertainty in unmodeled dynamics and environmental variables. 

The control action $U_t$ is determined at each time by the control policy. We assume that accurate information of the system state can be used for control. The control policy is composed of a nominal controller and additional modification scheme to ensure the safety specifications illustrated in subsections~\ref{SS:Characterization of Safe Behaviours} and~\ref{SS:Safety_Specification}. The nominal controller is represented by 
\begin{align}
\label{eq:nominal_controller}
    U_t = N(X_t),
\end{align}
which does not necessarily account for the safety specifications defined below. To adhere to the safety specifications, the output of the nominal controller is then modified by another scheme. The overall control policy involving the nominal controller and the modification scheme is represented by 
\begin{align}
\label{eq:generic_controller}
    U_t = K_N(X_t, L_t, T_t),
\end{align}
where $K_N: \R^{n}\times\R\times\R \rightarrow \R^m$ is a deterministic function of the current state $X_t$, safety margin $L_t$, and time horizon $T_t$ to the current control action $U_t$. The policy of the form~\eqref{eq:generic_controller} assumes that the decision rule is time-invariant,\footnote{\label{ft:timeinvariant-control} The functions $N$, $K_N$ do not change over time} and that the control action can be uniquely determined for each $(X_t, L_t, T_t)$. This policy is also assumed to be memory-less in the sense that it does not use the past history of the state $\{X_\tau\}_{\tau < t}$ to produce the control action $U_t$. The assumption for memory-less controller is reasonable because the state evolution $dX_t$ of system \eqref{eq:x_trajectory} only depends on the current system state $X_t$.\footnote{\label{ft:timeinvariant} Note that $f(X_t)$, $g(X_t)$, and $\sigma(X_t)$ are time-invariant functions of the system state.} We restrict ourselves to the settings when $f$, $g$, $\sigma$, $N$, and $K_N$ have sufficient regularity conditions such that both the closed loop system of \eqref{eq:x_trajectory} and \eqref{eq:generic_controller} have unique strong solutions.\footnote{\label{ft:FN-Solution}Conditions required to have a unique strong solution can be found in~\cite[Chapter~5]{moustris2011evolution},~\cite[Chapter~1]{oksendal_stochastic_2003a}, \cite[Chapter II.7]{borodin_stochastic_2017} and references therein.}

The safe region of the state is specified by the zero super level set of a continuously differentiable barrier function $\phi(x) : \R^n \rightarrow \R$, \ie 
\begin{align}
    \gC(0) = \left\{x \in \R^n : \phi(x) \geq 0 \right\}.
\end{align}
We use 
\begin{align}
\label{eq:l-level_set}
    \gC(L) := \left\{x \in \mathbb{R}^{n}: \phi(x) \geq L \right\}
\end{align}
to denote the set with safety margin $L$. 
Accordingly, we use $\interior{\gC(0)}  = \{x\in\R^{n}: \phi(x) > 0  \}$ to denote the interior of the safe set, $\gC(0)^c = \{x\in\R^{n}: \phi(x)<0 \}$ to denote the unsafe set, $\partial \gC(L) = \{x\in\R^n:\phi(x) = L \}$ to denote the boundary of $L$ super level set. 


\subsection{Probabilistic Characterization of Safe Behaviours}
\label{SS:Characterization of Safe Behaviours}

The system must satisfy the following two types of probabilistic safety specifications: forward invariance and forward convergence.

\subsubsection{Forward Invariance}
\label{SSS:Forward_Invariance}

The forward invariance property refers to the system's ability to keep its state within a set when the state originated from the set. The probabilistic forward invariance to a set $\gC(L)$ can be quantified using 
\begin{align}
\label{eq:forward_invariance}
    \mP\left(\, X_\tau \in \gC(L), \forall \tau \in [t,t+T] \ | \ X_t=x \, \right)
\end{align}
for some time interval $[0,T]$ conditioned on an initial condition $x \in \gC(L)$. Probability \cref{eq:forward_invariance} can be computed from the distribution of the following two random variables:\footnote{\label{eq:rv-intro} These random variables are previously introduced and analyzed in~\cite{chern2021safecontrolatcdc}.}
\begin{align}
\label{eq:min_phi}
   \minf_x(T) &:= \inf\{ \phi(X_t) \in \R : t\in [0,T] , X_0 = x  \},\\
\label{eq:first_exit_time}
   \exit_x(L)  &:= \inf\{t \in \R_+ :  \phi(X_t) < L , X_0 = x \}.
\end{align}
Here, $\minf_x(T)$ is the worst-case safety margin from the boundary of the safe set $\partial \gC(0)$ during $[0,T]$, and $\exit_x(L)$ is the time when the system exit from $\gC(L)$ for the first time. We can rewrite \cref{eq:forward_invariance} using the two random variables \cref{eq:min_phi,eq:first_exit_time} as
\begin{align} 
    &\mP\left(\, X_\tau \in \gC(L), \forall \tau \in [t,t+T] \ | \ X_t=x \, \right)\\
    \label{eq:safe_prob_invariant}
        &=\: \mP\left(X_\tau \in \gC(L), \forall \tau \in [0,T] \ | \ X_0=x \right) \\
        &=\: \mP (\minf_x(T) \geq \varmargin) \\
        &=\: \mP(\exit_x(\varmargin) > T ) =\: 1- \mP(\exit_x(\varmargin) \leq T).
\end{align}
Here, equality \eqref{eq:safe_prob_invariant} holds due to the time-invariant nature of the system\footref{ft:timeinvariant} and control policies\footref{ft:timeinvariant-control}. 

\subsubsection{Forward Convergence}
\label{SSS:Safety_Recovery}

The forward convergence property indicates the system's capability for its state to enter a set when the state originated from outside the set. This probabilistic forward convergence can be quantified using
\begin{align}
\label{eq:forward_convergence}
    \mP\left(\, \exists \tau \in [t,t+T] \text{ s.t. } X_\tau \in \gC(L)\ |\ X_t = x \, \right)
\end{align}
for some time interval $[0,T]$ conditioned on an initial condition $x \in \gC(L)^c $. Similar to the case of forward invariance, probability \cref{eq:forward_convergence} can also be computed from the distribution of the following two random variables:\footref{eq:rv-intro}
\begin{align}
\label{eq:max_phi}
   \maxf_x(T) & := \sup\{ \phi(X_t) \in \R : t \in [0,T] , X_0 = x  \},\\
\label{eq:first_entry_time}
   \entrance_x(L) & := \inf\{t \in \R_+ :  \phi(X_t)\geq L , X_0 = x \}.
\end{align}
Here, $\maxf_x(T)$ indicates the distance to the boundary of the safe set $\partial\gC(0)$, and $\entrance_x(L)$ is the duration for the state to enter the set $\gC(L)$ for the first time. We can also rewrite \cref{eq:forward_convergence} using the two random variables \cref{eq:max_phi,eq:first_entry_time} as
\begin{align} 
\label{eq:safe_prob_convergent}
        &\mP\left(\, \exists \tau \in [t,t+T] \text{ s.t. } X_\tau \in \gC(L)\ |\ X_t = x \, \right)\\
        &=\: \mP\left(\ \exists \tau \in [0,T] \text{ s.t. } X_\tau \in \gC(L)\ |\ X_0 = x \ \right)\\
        & =\:  \mP( \maxf_x(T) \geq \varmargin) \\
        & =\:  \mP ( \entrance_x(\varmargin) \leq  T).
\end{align}


\subsection{Design goals}
\label{SS:Safety_Specification}

In this paper, we design the control policy with the long-term safety guarantees given in the forms alike \eqref{eq:forward_invariance} or \eqref{eq:forward_convergence}.

When the goal is to guarantee probabilistic forward invariance, we aim to ensure the following condition: for each time $t \in \R_+$,
\begin{align}
\label{eq:FI-probability}
    \mP \big( X_\tau \in\gC(\varmargin_t), \forall \tau\in [t,t+T_t] \big) \geq 1-\epsilon, 
\end{align}
conditioned on the initial condition $X_0 = x$, for some $\epsilon \in (0,1)$. From now on, all probabilities are conditioned on the initial condition $X_0 = x$ unless otherwise noted. Here, $\varmargin_t$ is the desired safety margin, and $T_t$ is the outlook time horizon. For each time $t$, condition \eqref{eq:FI-probability} constrains the probability of staying within the safe set with margin $L_t$ during the time interval $[t, t+T_t]$ to be above $1-\epsilon$.

When the goal is to guarantee probabilistic forward convergence, we aim to ensure the following condition: for each time $t \in \R_+$,
\begin{align}
\label{eq:FC-probability}
    \mP \big(\exists \tau \in [t,t+T_t] \text{ s.t. } X_\tau \in \gC(\varmargin_t) \big)\geq 1-\epsilon,
\end{align}
conditioned on the initial condition $X_0 = x$, for some $\epsilon \in (0,1)$. 

In both cases, the value of $\epsilon \in (0,1)$ is chosen based on risk tolerance. In \eqref{eq:FI-probability} and \eqref{eq:FC-probability}, the probabilities are taken over the distribution of $X_t$ and its future trajectories $\{X_\tau\}_{\tau \in (t, t+T_t]}$ conditioned on $X_0=x$. 
The distribution of $X_t$ is generated based on the closed-loop system of \eqref{eq:x_trajectory} and \eqref{eq:generic_controller}, whereas the distribution of $\{X_t\}_{t \in (t, t+ T_t]}$ are allowed to be defined in two different ways based on the design choice: the closed-loop system of \eqref{eq:x_trajectory} and \eqref{eq:nominal_controller} or the closed-loop system of \eqref{eq:x_trajectory} and \eqref{eq:generic_controller}.

We consider either fixed time horizon or receding time horizon. In the fixed time horizon, safety is evaluated at each time $t$ for a time interval $[t,t+H]$ of fixed length. In the receding time horizon, we evaluate, at each time $t$, safety only for the remaining time $[t,H]$ given a fixed horizon. The outlook time horizon for each case is given by
\begin{align}
\label{eq:time_horizon_cases}
    T_t & =
    \begin{cases}
        H, & \text{for fixed time horizon,} \\
        H-t, & \text{for receding time horizon.}
    \end{cases}
\end{align}
The safety margin is assumed to be either fixed or time varying. Fixed margin refers to when the margin remains constant at all time, \ie $\varmargin_t = \ell$. For time-varying margin, we consider the margin $\varmargin_t$ that evolves according to
\begin{align}
\label{eq:f_ell_def}
    d\varmargin_t = f_\ell (\varmargin_t),\ \varmargin_0 = \ell,
\end{align}
for some continuously differentiable function $f_\ell$.\footnote{This representation also captures fixed margin by setting $f_\ell (\varmargin_t) \equiv 0$.} The values of $T_t $ and $\{\varmargin_t\}_{t \in [0,\infty)}$ are determined based on the design choice.

%




\section{Proposed Method}
\label{S:Proposed_Method}

Here, we present a sufficient condition to achieve the safety requirements in subsection \ref{SS:safety_conditions}. Based on this condition, we propose two safe control algorithms in subsection \ref{SS:Proposed_Algorithm} and outline a method to boost algorithm performance in subsection \ref{SS:Accurate_Gradient}.

Before presenting these results, we first define a few notations. To capture the time-varying nature of $T_t \text{ and } \varmargin_t$, we augment the state space as
\begin{align}
    Z_t:=\begin{bmatrix}
    T_t \\
    \varmargin_t \\
    \phi(X_t)\\
    X_t
    \end{bmatrix}\in\R^{n+3}.
\end{align}
The dynamics of $Z_t$ satisfies the following SDE:
\begin{align}
\label{eq:z_t_dyn}
    dZ_t = (\tilde{f}(Z_t)+\tilde{g}(Z_t)U_t) dt+\tilde{\sigma}(Z_t)dW_t.
\end{align}
Here, $\tilde{f}$, $\tilde{g}$, and $\tilde{\sigma}$ are defined to be
\begin{align}
\label{eq:tilde_f}
    \tilde{f}(Z_t) & :=\begin{bmatrix}
    f_T\\
    f_\ell (\varmargin_t)\\
    f_\phi (X_t)\\
    f(X_t)
    \end{bmatrix} \in \R^{(n+3)}, \\
    \tilde{g}(Z_t) & :=\begin{bmatrix}
    \mathbf{0}_{2\times n}\\
    \gL_{g}\phi(X_t)  \\
    g(X_t)
    \end{bmatrix} \in \R^{(n+3) \times m}, \\
    \tilde{\sigma}(Z_t) & :=\begin{bmatrix}
    \mathbf{0}_{2\times n}\\
    \gL_\sigma \phi(X_t) \\
    \sigma(X_t)
    \end{bmatrix} \in \R^{(n+3) \times \dimw}.
\end{align}
In \eqref{eq:tilde_f}, the scalar $f_T$ is given by
\begin{align}
\label{eq:f_T_def}
f_T&:=\begin{cases}
        0, & \text{in fixed time horizon,} \\
        -1, & \text{in receding time horizon,}
    \end{cases}
\end{align}
the function $f_\ell$ is given by \eqref{eq:f_ell_def}, and the function $f_\phi$ is given by 
\begin{align}
\label{eq:f_phi_def}
f_\phi (X_t)&:= \gL_{f}\phi(X_t) +\frac{1}{2} \text{tr} \big(\left[\sigma(X_t)\right]\left[\sigma(X_t)\right]^\intercal\Hess \phi(X_t)\big).
\end{align}

\begin{remark}
\label{rm:lie-derivative}
The Lie derivative of a function $\phi(x)$ along the vector field $f(x)$ is denoted as $\gL_{f}\phi(x) = f(x) \cdot \nabla \phi(x) $. The Lie derivative $(\gL_{g} \phi(x))$ along a matrix field $g(x)$ is interpreted as a row vector such that $\left( \gL_{g}\phi(x) \right) u  = \left( g(x) u \right) \cdot \nabla \phi(x)$.
\end{remark}


\subsection{Conditions to Assure Safety}
\label{SS:safety_conditions}

We consider one of the following four types of probabilistic quantities:\footnote{Recall from Section~\ref{SS:Safety_Specification} that whenever we take the probabilities (and expectations) over paths, we assume that the probabilities are conditioned on the initial condition \(X_0 = x\).}
\begin{align}
\label{eq:cases_summary}
    \sprob(Z_t) := 
    \begin{cases}
        \mP \left(\minf_{X_t}(T_t) \geq \varmargin_t \right) & \text{for type I,} \\
        \mP \left(\exit_{X_t}(\varmargin_t) > T_t \right) & \text{for type II,} \\
        \mP \left( \maxf_{X_t}(T_t) \geq \varmargin_t \right) & \text{for type III,} \\
        \mP \left( \entrance_{X_t}(\varmargin_t) \leq  T_t \right) & \text{for type IV,}
    \end{cases}
\end{align}
where the probability is taken over the same distributions of $\{X_\tau\}_{\tau \in [t, T_t]}$ that are used in the safety requirement \cref{eq:FI-probability,eq:FC-probability}. The values of $T_t$ and $\varmargin_t$ (known and deterministic) are defined in \cref{eq:time_horizon_cases,eq:f_ell_def} depending on the design choice of receding/fixed time-horizon and fixed/varying margin. 

Additionally, we define the mapping $D_\sprob:\R^{n+3}\times\R^m \rightarrow \R$ as\footnote{See Remark \ref{rm:lie-derivative} for the notation for Lie derivative.}
\begin{align}
\label{eq:infgen_to_mf}
    \begin{split}
        D_\sprob (Z_t,U_t) 
        := & \ \gL_{\tilde{f}}\sprob(Z_t)+ \left( \gL_{\tilde{g}}\sprob(Z_t) \right) U_t \\ 
        &+\frac{1}{2}\text{tr} \left(\left[\tilde{\sigma}(Z_t)\right]\left[\tilde{\sigma}(Z_t)\right]^\intercal\Hess \sprob(Z_t)\right).
    \end{split}
\end{align}
From It\^o's Lemma,\footnote{It\^o's Lemma is stated as below: Given a $n$-dimensional real valued diffusion process $dX = \mu dt + \sigma dW$ and any twice differentiable scalar function $f: \R^n \rightarrow \R$, one has 
$
df= \left(\gL_\mu f + \frac{1}{2}\tr\left(\sigma\sigma^\intercal \Hess{f}\right)\right) dt + \gL_\mu \sigma dW.
$} the mapping \eqref{eq:infgen_to_mf} essentially evaluates the value of the infinitesimal generator of the stochastic process $Z_t$ acting on $\sprob$: \ie $A\sprob (Z_t) = D_\sprob (z,u)$ when the control action $U_t = u$ is used when $Z_t = z$.

We propose to constrain the control action $U_t$ to satisfy the following condition at all time $t$: 
\begin{align}
\label{eq:safety_condition_each_Zt}
	D_\sprob (Z_t,U_t) \geq -\alpha \left(\sprob(Z_t) - (1-\epsilon) \right).
\end{align}
Here, $\alpha: \R \rightarrow \R$ is assumed to be a monotonically-increasing, concave or linear function that satisfies $\alpha(0) \leq 0$. From \eqref{eq:infgen_to_mf}, condition \eqref{eq:safety_condition_each_Zt} is affine in $U_t$. This property allows us to integrate condition \eqref{eq:safety_condition_each_Zt} into a convex/quadratic program. 
\begin{theorem}
\label{lm:main_lemma}
   Consider the closed-loop system of $\eqref{eq:x_trajectory}$ and \eqref{eq:generic_controller}.\footnote{Recall from subsection~\ref{sec:SystemDescription} that $f$, $g$, $\sigma$, $N$, and $K_N$ are assumed to have sufficient regularity conditions.} Assume that $\sprob(z)$ in \cref{eq:cases_summary} is a continuously differentiable function of $z \in \R^{n+3}$ and $\mE [ \sprob(Z_t) ]$ is differentiable in $t$. If system \eqref{eq:x_trajectory} originates at $X_0=x$ with $\sprob(z)>1-\epsilon$, and the control action satisfies \eqref{eq:safety_condition_each_Zt} at all time, then the following condition holds:\footnote{Here, the expectation is taken over $X_t$ conditioned on $X_0 = x$, and $\sprob$ in \cref{eq:cases_summary} gives the probability of forward invariance/convergence of the future trajectories $\{X_\tau\}_{(t , t+T_t]}$ starting at $X_t$.}
	\begin{align}
	\label{eq:satisfy_control_policy}
		\mE\left[\sprob(Z_t) \right] \geq 1 - \epsilon
	\end{align}
	for all time $t \in \R_+$. 
\end{theorem}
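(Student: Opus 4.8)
The plan is to reduce the stochastic statement to a scalar differential inequality on $p(t):=\mE[\sprob(Z_t)]$ and then close it with a comparison (Nagumo-type) argument. Since the closed-loop dynamics \eqref{eq:z_t_dyn} under $U_t=K_N(X_t,L_t,T_t)$ is time-homogeneous (the augmented drift, diffusion, and the control law all depend on $Z_t$ alone), the mapping $D_\sprob(z,u)$ is, as noted after \eqref{eq:infgen_to_mf}, exactly the value $A\sprob(z)$ of the generator of the closed-loop process $Z_t$ applied to $\sprob$ when action $u$ is used at state $z$. Using Dynkin's formula together with the hypothesis that $t\mapsto\mE[\sprob(Z_t)]$ is differentiable, I would obtain $\frac{d}{dt}p(t)=\mE[D_\sprob(Z_t,U_t)]$ for all $t\ge 0$. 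Note also that $Z_0$ is deterministic given $X_0=x$, so $p(0)=\sprob(z)>1-\epsilon$.

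Next I would feed in the safety condition. Condition \eqref{eq:safety_condition_each_Zt} holds pathwise (a.s.) along the trajectory, i.e. $D_\sprob(Z_t,U_t)\geq -\alpha\big(\sprob(Z_t)-(1-\epsilon)\big)$. Taking expectations, and then applying Jensen's inequality to the concave function $\alpha$ (here $\sprob(Z_t)\in[0,1]$ is bounded, so all quantities are integrable), gives
\begin{align*}
\frac{d}{dt}p(t)=\mE[D_\sprob(Z_t,U_t)] &\geq -\mE\big[\alpha(\sprob(Z_t)-(1-\epsilon))\big]\\
&\geq -\alpha\big(\mE[\sprob(Z_t)]-(1-\epsilon)\big) = -\alpha\big(p(t)-(1-\epsilon)\big).
\end{align*}
Writing $q(t):=p(t)-(1-\epsilon)$, this is the scalar differential inequality $\dot q(t)\geq -\alpha(q(t))$ with $q(0)>0$.

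To conclude $q(t)\geq 0$, hence \eqref{eq:satisfy_control_policy}, I would invoke the comparison lemma. Let $r(t)$ solve $\dot r=-\alpha(r)$ with $r(0)=q(0)>0$; a concave (in particular, linear) $\alpha$ is locally Lipschitz on $\R$, so this solution is unique, and since $q(t)$ is a shifted probability it stays bounded, so $r$ exists for all $t\ge0$ and the comparison lemma yields $q(t)\geq r(t)$. Finally, $[0,\infty)$ is forward invariant for $\dot r=-\alpha(r)$: at the boundary $r=0$ we have $\dot r=-\alpha(0)\geq 0$ (Nagumo's condition), which combined with local Lipschitzness and $r(0)>0$ forces $r(t)\geq 0$ for all $t\ge 0$. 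Therefore $p(t)=q(t)+(1-\epsilon)\geq r(t)+(1-\epsilon)\geq 1-\epsilon$.

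I expect the last step to be the real obstacle: the naive "first time $q$ returns to $0$" argument only gives $\dot q\ge 0$ at that instant, not $\dot q>0$, so one genuinely needs the comparison principle (and the accompanying local-Lipschitz/uniqueness bookkeeping for $\alpha$, plus a check that the comparison ODE does not blow up in finite time). A secondary technical point to flag is the regularity underlying the Dynkin step: the statement assumes only $\sprob\in C^1$ with $t\mapsto\mE[\sprob(Z_t)]$ differentiable, so identifying $\frac{d}{dt}\mE[\sprob(Z_t)]$ with $\mE[D_\sprob(Z_t,U_t)]$ rests on $\sprob$ lying in the domain of the closed-loop generator, as asserted in the discussion following \eqref{eq:infgen_to_mf}.
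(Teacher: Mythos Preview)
Your argument is correct and shares the same skeleton as the paper's proof---Dynkin's formula to identify $\frac{d}{dt}\mE[\sprob(Z_t)]$ with $\mE[A\sprob(Z_t)]$, expectation of \eqref{eq:safety_condition_each_Zt}, concavity of $\alpha$, then a scalar forward-invariance argument---but two of the steps are executed differently. For the concavity step, the paper does not apply Jensen globally as you do; instead it conditions on the events $\{\sprob(Z_\tau)<1-\epsilon\}$ and $\{\sprob(Z_\tau)\geq 1-\epsilon\}$, applies Jensen on each piece and once more to recombine, and extracts only the weaker implication ``$p(\tau)\leq 1-\epsilon\Rightarrow \frac{d}{d\tau}p(\tau)\geq 0$.'' Your single Jensen application yields the sharper inequality $\dot q(t)\geq -\alpha(q(t))$ for all $t$, which is cleaner and, via the comparison solution $r$, delivers quantitative lower bounds the paper's argument does not. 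For the closing step, the paper proves an elementary scalar lemma (intermediate value theorem plus mean value theorem) showing that $y_0>L$ together with ``$\dot y\geq 0$ whenever $y\leq L$'' forces $y_t\geq L$; this is precisely the ``first time $q$ hits $0$'' argument you were worried about, and it does go through without any comparison-principle or Lipschitz bookkeeping. So your route buys a stronger intermediate estimate, while the paper's avoids the auxiliary ODE entirely.
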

\begin{proof}[Proof (\cref{lm:main_lemma})]
First, we show that
\begin{align}
\label{eq:expectation_less_than}
    \mE[\sprob(Z_\tau)] \leq 1-\epsilon
\end{align}
implies
\begin{align}
\label{eq:expectation_morethan_0}
    \mE \left[\alpha\left(\sprob(Z_\tau) - (1-\epsilon) \right) \right] \leq 0.
\end{align}
Let $\tau$ is the time when \eqref{eq:expectation_less_than} holds. We first define the 
events $D_i$ and a few variables $v_i,q_i, \text{ and } \delta_i$, $i\in\{0,1\}$, as follows:
\begin{align}
    \label{eq:D0}
    D_0 & = \left\{\sprob(Z_\tau) < 1-\epsilon \right\}, \\
    \label{eq:D1}
    D_1 & = \left\{\sprob(Z_\tau) \geq 1-\epsilon \right\}, \\
    \label{eq:V0}
    v_0 & = \mE \left[\sprob(Z_\tau) \mid D_0 \right] = 1-\epsilon-\delta_0, \\
    \label{eq:V1}
    v_1 & = \mE \left[\sprob(Z_\tau) \mid D_1 \right] = 1-\epsilon+\delta_1, \\
    \label{eq:P0}
    q_0 & = \mP(D_0), \\
    \label{eq:P1}
    q_1 & = \mP(D_1).
\end{align}
The left hand side of \cref{eq:expectation_less_than} can then be written as
\begin{align*}
    \mE[\sprob(Z_\tau)] & = \mE \left[\sprob(Z_\tau) \mid D_0 \right] \mP(D_0) +\mE \left[\sprob(Z_\tau) \mid D_1 \right] \mP(D_1) \\
    \label{eq:expectation_in_VP}
    & = v_0q_0 + v_1q_1. \eqnumber
\end{align*}
From
\begin{align}
\label{eq:fact1_from}
    \begin{aligned}
        \mE \left[\sprob(Z_\tau) \mid D_0 \right] & < 1-\epsilon, \\
        \mE \left[\sprob(Z_\tau) \mid D_1 \right] & \geq 1-\epsilon,
    \end{aligned}
\end{align}
we obtain
\begin{align}
\label{eq:fact1}
    \delta_0 \geq 0 \quad\text{and}\quad \delta_1 \geq 0.
\end{align}
Moreover, $\{q_i\}_{i\in\{0,1\}}$ satisfies
\begin{align}
\label{eq:fact2}
    \mP(D_0) + \mP(D_1) = q_0 + q_1 = 1.
\end{align}
Combining \cref{eq:expectation_less_than,eq:expectation_in_VP} gives
\begin{align}
\label{eq:combine_expectation}
    v_0q_0 + v_1q_1 \leq 1-\epsilon.
\end{align}
Applying \cref{eq:V0,eq:V1} to \cref{eq:combine_expectation} gives
\begin{align}
\label{eq:fact3_from}
    \left(1-\epsilon-\delta_0 \right)q_0 + \left(1-\epsilon+\delta_1 \right)q_1 \leq 1-\epsilon,
\end{align}
which, combined with \eqref{eq:fact2}, yields
\begin{align}
\label{eq:fact3}
    \delta_1 q_1 - \delta_0 q_0 \leq 0.
\end{align}
On the other hand, we have
\begin{align*}
    & \mE \left[\alpha\left(\sprob(Z_\tau) - (1-\epsilon) \right) \right] \\
    &\ \ \ \ \ = \ \mP(D_0) \left( \mE \left[\alpha\left(\sprob(Z_\tau) - (1-\epsilon) \right) \mid D_0 \right] \right) \\
    &\ \ \ \ \ \ \ \ + \mP(D_1) \left( \mE \left[\alpha\left(\sprob(Z_\tau) - (1-\epsilon) \right) \mid D_1 \right] \right) \eqnumber \\
    &\ \ \ \ \ = \ q_0 \left( \mE \left[\alpha\left(\sprob(Z_\tau) - (1-\epsilon) \right) \mid D_0 \right] \right) \\
    &\ \ \ \ \ \ \ \ + q_1 \left( \mE \left[\alpha\left(\sprob(Z_\tau) - (1-\epsilon) \right) \mid D_1 \right] \right) \label{eq:expectation_given_q} \eqnumber \\
    &\ \ \ \ \ \leq \ q_0 \left( \alpha\left(\mE \left[\sprob(Z_\tau) - (1-\epsilon) \mid D_0 \right] \right) \right) + \\
    &\ \ \ \ \ \ \ \ + q_1 \left( \alpha\left(\mE \left[\sprob(Z_\tau) - (1-\epsilon) \mid D_1 \right] \right) \right) \label{eq:inequality_from_jensen_rule} \eqnumber \\
    &\ \ \ \ \ = \ q_0 \left( \alpha\left(-\delta_0 \right) \right) + q_1 \left( \alpha\left(\delta_1 \right) \right) \label{eq:jense_inequality_given_V0V1} \eqnumber \\
    &\ \ \ \ \ \leq \ \alpha \left(-q_0\delta_0 + q_1\delta_1 \right) \label{eq:jensen_inequality_given_V0V1_assume_A2} \eqnumber \\
    &\ \ \ \ \ \leq \ 0. \label{eq:expectation_lessthan_0} \eqnumber
\end{align*}
Here, \cref{eq:expectation_given_q} is due to \cref{eq:P0,eq:P1}; \cref{eq:inequality_from_jensen_rule} is obtained from Jensen's inequality~\cite{Jensen1906} for concave function $\alpha$; \cref{eq:jense_inequality_given_V0V1} is based on \cref{eq:V0,eq:V1}; \cref{eq:jensen_inequality_given_V0V1_assume_A2} is given by assumption A2; and \cref{eq:expectation_lessthan_0} is due to \cref{eq:fact3}. Thus, we showed that \cref{eq:expectation_less_than} implies \cref{eq:expectation_morethan_0}.

Using Dynkin's formula, given a time-invariant control policy, the sequence $\mE[\sprob(Z_t)]$ takes deterministic value over time where the dynamics is given by
\begin{align}
    \frac{d}{d\tau} \mE[\sprob(Z_\tau)] = \mE [A\sprob(Z_\tau)].
\end{align}
Condition \cref{eq:safety_condition_each_Zt} implies
\begin{align}
\label{eq:dif_E_bigger_than_E}
    \mE [A\sprob(Z_\tau)] \geq - \mE \left[\alpha\left(\sprob(Z_\tau) - \left(1-\epsilon\right)\right)\right].
\end{align}
Therefore, we have
\begin{align}
\label{eq:dif_E_bigger_than_0}
        \frac{d}{d\tau} \mE [\sprob(Z_\tau)] \geq 0 \quad\text{whenever \(\mE[\sprob(Z_\tau)] \leq 1 - \epsilon\)}.
\end{align}
This condition implies
\begin{align}
    \mE[\sprob(Z_t)] \geq 1-\epsilon  \quad\text{for all \(t\in\R_+\).}
\end{align}
due to \cref{lm:lm2}, which is given below.
\end{proof}

\begin{lemma}
\label{lm:lm2} 
Let \(y\colon\R_+\to\R\) be a real-valued differentiable function that satisfies 
\begin{align}
\label{eq:y-ineq}
    {d\over dt}y_t \geq 0\quad\text{whenever \(y_t\leq L\)}.
\end{align}
Additionally, we assume \(y_0>L\).  Then
\begin{align}
    y_t\geq L\quad\text{for all \(t\in\R_+\).}
\end{align}
\end{lemma}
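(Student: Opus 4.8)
The plan is to argue by contradiction, exploiting the continuity of $y$ together with the intermediate value theorem to locate a "first bad time" and then derive a contradiction with the hypothesis \eqref{eq:y-ineq}. First I would suppose, for contradiction, that the conclusion fails, i.e. that the set $B := \{t \in \R_+ : y_t < L\}$ is nonempty. Since $y$ is differentiable, it is continuous, and since $y_0 > L$ the set $B$ is bounded away from $0$; let $t^\star := \inf B$. By continuity $t^\star > 0$ and $y_{t^\star} = L$ (it cannot be $< L$ by infimality and continuity, and if it were $> L$ then continuity would push $B$ away from $t^\star$).

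Next I would examine what happens just after $t^\star$. By definition of the infimum there is a sequence $t_k \downarrow t^\star$ with $y_{t_k} < L$, so the difference quotients $(y_{t_k} - y_{t^\star})/(t_k - t^\star)$ are all negative, forcing $y'(t^\star) \le 0$. I then want to upgrade this to a strict contradiction; the cleanest route is to observe that \eqref{eq:y-ineq} applies at $t^\star$ (since $y_{t^\star} = L \le L$), giving $y'(t^\star) \ge 0$, hence $y'(t^\star) = 0$. This alone is not yet a contradiction, so the argument needs one more step: consider the "last time at level $L$ before entering $B$." Concretely, pick any $t_1 \in B$; on the compact interval $[t^\star, t_1]$ the continuous function $y$ attains $L$ at $t^\star$ and is $< L$ at $t_1$, so $t_2 := \sup\{t \in [t^\star, t_1] : y_t \ge L\}$ is well-defined with $y_{t_2} = L$ and $y_t < L$ for all $t \in (t_2, t_1]$. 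On $(t_2, t_1)$ the hypothesis \eqref{eq:y-ineq} gives $y' \ge 0$, so $y$ is nondecreasing there; letting $t \downarrow t_2$ and using continuity yields $y_{t_1} \ge y_{t_2} = L$, contradicting $y_{t_1} < L$.

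An alternative, perhaps slicker, phrasing avoids the two-step infimum argument: define $t_2$ directly as the last time $y$ equals $L$ before some chosen violation time $t_1 \in B$, as above, and run only the monotonicity argument on $(t_2, t_1)$. I would present it this way to keep the proof short. The one hypothesis that must be used carefully is $y_0 > L$ (strict), which guarantees $B$ does not contain $0$ and hence that a genuine "crossing" point $t_2 \in (0, t_1)$ with $y_{t_2} = L$ exists; without strictness one could have $y \equiv L$ or $y_0 = L$ with immediate descent, and the statement would need adjusting.

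The main obstacle is purely a matter of bookkeeping rather than deep mathematics: one must be careful that \eqref{eq:y-ineq} is a pointwise statement about the derivative under the condition $y_t \le L$, not a differential inequality one can integrate blindly, so the rigorous step is the passage from "$y' \ge 0$ on an open interval where $y < L$" to "$y$ is nondecreasing on the closure of that interval," which uses the mean value theorem on subintervals plus continuity at the endpoint. Everything else is standard real analysis.
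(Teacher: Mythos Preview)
Your proposal is correct, and your ``slicker'' alternative is exactly the paper's argument: pick a violation time $b$ with $y_b < L$, take $a$ to be the last time before $b$ at which $y_a = L$ so that $y_t < L$ on $(a,b]$, and derive a contradiction on that interval. The only cosmetic difference is that the paper applies the mean value theorem once on $[a,b]$ to produce a single $\tau\in(a,b)$ with $y'_\tau = (y_b - y_a)/(b-a) < 0$, contradicting \eqref{eq:y-ineq} directly, whereas you phrase the same step as ``$y'\ge 0$ on $(a,b)$ forces $y$ nondecreasing, hence $y_b \ge y_a = L$''; your initial detour through $t^\star = \inf B$ and the derivative at $t^\star$ is unnecessary, as you yourself note.
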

\begin{proof}
    Suppose there exists \(b\in\R_+\) such that \(y_b<L\). By the intermediate value theorem, there exists \(a\in (0,b)\) such that \(y_a = L\), and \(y_t<L\) for all \(t\in(a,b]\). Next, by the mean value theorem, there exists \(\tau\in (a,b)\) such that \((dy_t/dt)|_{t=\tau} = (y_b-y_a)/(b-a) < 0\). This contradicts condition \eqref{eq:y-ineq}.  
\end{proof}


\begin{corollary}
\label{crlry:safety_condition_for_I_II}
    Consider the closed-loop system of $\eqref{eq:x_trajectory}$ and \eqref{eq:generic_controller} with the assumptions stated in \cref{lm:main_lemma}. Let $\sprob$ be defined as type I or II in \cref{eq:cases_summary}.  
    If the system state originates at $X_0=x$ with $\sprob(z)>1-\epsilon$, and the control action satisfies \eqref{eq:safety_condition_each_Zt} at all time $t \in \R_+$, then condition \eqref{eq:FI-probability} holds. 
\end{corollary}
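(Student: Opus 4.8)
The plan is to deduce Corollary~\ref{crlry:safety_condition_for_I_II} from Theorem~\ref{lm:main_lemma} by removing the expectation on the left-hand side of \eqref{eq:satisfy_control_policy}. The key observation is that for types I and II, the random variable $\sprob(Z_t)$ is itself a conditional probability of a future event given $X_t$ (equivalently, given $Z_t$, since $T_t$ and $\varmargin_t$ are deterministic functions of $t$). Concretely, for type~I, $\sprob(Z_t) = \mP(\minf_{X_t}(T_t) \geq \varmargin_t) = \mP(X_\tau \in \gC(\varmargin_t),\ \forall \tau \in [t, t+T_t] \mid X_t)$, and similarly for type~II via the equivalence \eqref{eq:safe_prob_invariant}. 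Taking the expectation over $X_t$ (conditioned on $X_0 = x$) and using the tower property of conditional expectation, $\mE[\sprob(Z_t)] = \mP(X_\tau \in \gC(\varmargin_t),\ \forall \tau \in [t, t+T_t])$, which is exactly the left-hand side of the design goal \eqref{eq:FI-probability}.

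So the steps, in order, would be: first, invoke Theorem~\ref{lm:main_lemma} under the stated hypotheses (which are inherited here), yielding $\mE[\sprob(Z_t)] \geq 1-\epsilon$ for all $t \in \R_+$. Second, for type~I, rewrite $\sprob(Z_t)$ explicitly as the conditional probability $\mE[\mathbb{1}\{X_\tau \in \gC(\varmargin_t),\ \forall \tau \in [t,t+T_t]\} \mid X_t]$, invoking \eqref{eq:safe_prob_invariant} to identify $\mP(\minf_{X_t}(T_t) \geq \varmargin_t)$ with this event probability; for type~II, use the further equality in \eqref{eq:safe_prob_invariant} that $\mP(\exit_{X_t}(\varmargin_t) > T_t)$ equals the same quantity. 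Third, apply the tower property to collapse $\mE[\sprob(Z_t)]$ into the unconditional probability $\mP(X_\tau \in \gC(\varmargin_t),\ \forall \tau \in [t, t+T_t])$. Chaining with the Theorem's conclusion gives \eqref{eq:FI-probability}.

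A subtlety worth flagging is the meaning of the distribution over which the inner probability $\sprob(Z_t)$ is taken versus the outer expectation. As noted in the problem statement, the future trajectory $\{X_\tau\}_{\tau \in (t, t+T_t]}$ used to define $\sprob$ may be governed either by the nominal controller \eqref{eq:nominal_controller} or the safe controller \eqref{eq:generic_controller}, while $X_t$ itself is always distributed according to the closed-loop system \eqref{eq:x_trajectory}--\eqref{eq:generic_controller}. The tower-property step must therefore use the same future distribution inside $\sprob$ as appears in \eqref{eq:FI-probability}, which is precisely how the design goal is set up in subsection~\ref{SS:Safety_Specification}; so the bookkeeping is consistent by construction, but it should be stated carefully. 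The time-invariance of the dynamics and policy (footnotes~\ref{ft:timeinvariant} and~\ref{ft:timeinvariant-control}), already used to justify \eqref{eq:safe_prob_invariant}, is what lets us shift the conditioning event from ``starting at time $t$'' to ``starting at time $0$ from state $X_t$.''

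The main obstacle is not any hard estimate — the argument is essentially a one-line application of the tower property once the identifications are made — but rather the careful handling of \emph{which} probability measure is in force for the future trajectory, and ensuring that the measurability/regularity assumptions of Theorem~\ref{lm:main_lemma} (continuous differentiability of $\sprob$ in $z$, differentiability of $t \mapsto \mE[\sprob(Z_t)]$) are genuinely available for the type~I and type~II quantities. These are assumed in the hypotheses carried over from the Theorem, so the corollary's proof can legitimately treat them as given; the write-up should simply make the tower-property identity \eqref{eq:safe_prob_invariant} explicit and then quote Theorem~\ref{lm:main_lemma}.
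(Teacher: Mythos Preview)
Your proposal is correct and follows essentially the same route as the paper: identify the left-hand side of \eqref{eq:FI-probability} with $\mE[\sprob(Z_t)]$ via the equivalences in \eqref{eq:safe_prob_invariant} (which is exactly your tower-property step), and then invoke Theorem~\ref{lm:main_lemma}. The paper's own proof is a three-line version of what you wrote; your additional remarks on the choice of future-trajectory distribution and the role of time-invariance are accurate elaborations but not extra ingredients.
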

\begin{proof}[Proof (\cref{crlry:safety_condition_for_I_II})] From \eqref{eq:safe_prob_invariant}, \eqref{eq:cases_summary}, and \cref{lm:main_lemma}, we have
    \begin{align*}
    	& \mP \left(X_\tau \in \C(\varmargin_t), \forall \tau \in [t,t+T_t] \right) \\
    	&= \mE \left[\sprob(Z_\tau) \right]\\
    	&\geq 1 - \epsilon ,
    \end{align*}
    which yields \eqref{eq:FI-probability}.
\end{proof}

\begin{corollary}
\label{crlry:safety_condition_for_III_IV}
	Consider the closed-loop system of $\eqref{eq:x_trajectory}$ and \eqref{eq:generic_controller} with the assumptions stated in \cref{lm:main_lemma}. Let $\sprob$ be defined as type III or IV in \cref{eq:cases_summary}. 
	If the system state originates at $X_0=x$ with $\sprob(z)>1-\epsilon$, and the control action satisfies \eqref{eq:safety_condition_each_Zt} at all time $t \in \R_+$,
	then condition \eqref{eq:FC-probability} holds.
\end{corollary}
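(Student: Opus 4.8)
The plan is to mirror the proof of \cref{crlry:safety_condition_for_I_II} almost verbatim, replacing the forward-invariance rewriting \eqref{eq:safe_prob_invariant} by its forward-convergence counterpart \eqref{eq:safe_prob_convergent}. First I would check that the hypotheses of \cref{lm:main_lemma} are in force: since $\sprob$ is taken to be type III or IV of \cref{eq:cases_summary}, the statement of the corollary already supplies continuous differentiability of $\sprob$ in $z$, differentiability of $t\mapsto\mE[\sprob(Z_t)]$, the initial inequality $\sprob(z)>1-\epsilon$, and the pointwise-in-time constraint \eqref{eq:safety_condition_each_Zt}. Hence \cref{lm:main_lemma} applies directly and yields $\mE[\sprob(Z_t)]\geq 1-\epsilon$ for every $t\in\R_+$.

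The remaining step is to translate this expectation bound into the safety specification \eqref{eq:FC-probability}. By \eqref{eq:safe_prob_convergent}, the probability that the trajectory reaches $\gC(\varmargin_t)$ at some instant of the window $[t,t+T_t]$, started from the current state, equals $\mP(\maxf_{X_t}(T_t)\geq\varmargin_t)$ and equivalently $\mP(\entrance_{X_t}(\varmargin_t)\leq T_t)$; in either parametrization this is exactly the quantity $\sprob(Z_t)$ in \cref{eq:cases_summary} evaluated along the augmented state, where the probability in the definition is over the future trajectory $\{X_\tau\}_{\tau\in(t,t+T_t]}$ while $Z_t$ is itself random through $X_t$. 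Taking the outer expectation over $X_t$ conditioned on $X_0=x$ and using the tower property, $\mP\big(\exists\,\tau\in[t,t+T_t]\text{ s.t. }X_\tau\in\gC(\varmargin_t)\big)=\mE[\sprob(Z_t)]$, so the bound from the previous paragraph gives the desired $\mP\big(\exists\,\tau\in[t,t+T_t]\text{ s.t. }X_\tau\in\gC(\varmargin_t)\big)\geq 1-\epsilon$, i.e.\ \eqref{eq:FC-probability}.

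The one point deserving care — and it is precisely the analogue of the subtlety already resolved for \cref{crlry:safety_condition_for_I_II} — is the time-invariance argument behind the second equality of \eqref{eq:safe_prob_convergent}, namely that conditioning on $X_t$ and measuring the convergence event over $[t,t+T_t]$ coincides with starting a fresh trajectory at the realized state and measuring over $[0,T_t]$. This rests on the time-invariance of the dynamics \eqref{eq:x_trajectory} and of the control law \eqref{eq:generic_controller} together with the Markov property of the closed loop, exactly as in \eqref{eq:safe_prob_invariant}. Since that identity is already established in subsection~\ref{SS:Characterization of Safe Behaviours}, no new work is needed; I therefore do not expect any genuine obstacle, as the corollary is just the specialization of \cref{lm:main_lemma} to the convergence-type quantities III and IV.
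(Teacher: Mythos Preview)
Your proposal is correct and follows essentially the same route as the paper's own proof: invoke \cref{lm:main_lemma} to obtain $\mE[\sprob(Z_t)]\geq 1-\epsilon$, then use \eqref{eq:safe_prob_convergent} together with \eqref{eq:cases_summary} to identify this expectation with the forward-convergence probability in \eqref{eq:FC-probability}. The paper's version is simply terser, omitting the discussion of the tower property and the time-invariance subtlety that you spell out.
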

\begin{proof}[Proof (\cref{crlry:safety_condition_for_III_IV})] From \cref{eq:safe_prob_convergent,eq:cases_summary}, we have
    \begin{align*}
    	& \mP \left(\exists \tau \in [t,t+T_t] \text{ s.t. } X_\tau \in \gC(\varmargin_t) \right) \\
    	&= \mE \left[\sprob(Z_\tau) \right]\\
    	&\geq 1 - \epsilon ,
    \end{align*}
    which yields \cref{eq:FC-probability}.
\end{proof}

\subsection{Safe control algorithms}
\label{SS:Proposed_Algorithm}

Here, we propose two safe control algorithms based on the safety conditions introduced in subsection~\ref{SS:safety_conditions}. In both algorithms, the value of $\sprob$ is defined as type I or II in \cref{eq:cases_summary} when the safety specification is given as forward invariance condition, and as type III or IV when the safety specification is given as forward convergence condition.

\subsubsection{Additive modification}
\label{SSS:additive}

We propose a control policy of the form 
\begin{align}
\label{eq:additive_modification_policy}
    K_N(X_t, L_t, T_t)=N(X_t)+\addmodfunc(Z_t)(\gL_{\tilde{g}}\sprob(Z_t))^\intercal.
\end{align}
Here, $N$ is the nominal control policy defined in \eqref{eq:nominal_controller}. 

The mapping $\addmodfunc:\R^{n+3}\rightarrow\R_+$ is chosen to be a non-negative function that are designed to satisfy the assumptions of \cref{lm:main_lemma} and makes $U_t = K_N(X_t, L_t, T_t)$ to satisfy \eqref{eq:safety_condition_each_Zt} at all time. Then, the control action $U_t = K_N(X_t, L_t, T_t)$ yields
\begin{align}
\label{eq:additive_modification_policy-exp}
&\mE [ d\sprob(Z_t) ] 
= A \sprob(Z_t) \\
\nonumber
&=  \gL_{\tilde{f}}\sprob + (\gL_{\tilde{g}}\sprob) N  
+\addmodfunc \gL_{\tilde{g}}\sprob \left(\gL_{\tilde{g}}\sprob \right)^\intercal 
+
\frac{1}{2}\text{tr} \left( \tilde{\sigma}\tilde{\sigma}^\intercal\Hess \sprob \right).
\end{align}
As $\addmodfunc$ is non-negative, the term $\addmodfunc \gL_{\tilde{g}}\sprob \left(\gL_{\tilde{g}}\sprob \right)^\intercal $ in \eqref{eq:additive_modification_policy-exp} takes non-negative values. This implies that the second term additively modify the nominal controller output $N(X_t)$ in the ascending direction of the forward invariance probability \eqref{eq:FI-probability} or forward convergence probability \eqref{eq:FC-probability}. 

\subsubsection{Constrained optimization}
\label{SSS:conditioning}

We propose a control policy of the form 
\begin{align}
\label{eq:conditioning}
    \begin{aligned}
        K_N(X_t, L_t, T_t) = && \argmin_{u} & \ \ J(N(X_t),u)\\
        && \text{s.t.} & \ \ \eqref{eq:safety_condition_each_Zt},
    \end{aligned}
\end{align}
Here, $J:\R^m\times\R^m\rightarrow\R$ is an objective function that penalizes the deviation from the desired performance, the nominal control action, and/or the costs. It is also designed to satisfy the assumptions of \cref{lm:main_lemma} to comply with the safety specification \eqref{eq:FI-probability} or \eqref{eq:FC-probability}. The constraint of \eqref{eq:conditioning} imposes that \eqref{eq:safety_condition_each_Zt} holds at all time $t$, and can additionally capture other design restrictions.\footnote{For example, $K_N$ is Lipschitz continuous when $J(N(x),u) = u^\intercal H(x) u$ with $ H(x)$ being a positive definite matrix (pointwise in $x$).} When $\left( \gL_{\tilde{g}}\sprob(z) \right) \neq 0$ for any $z$, there always exists $u$ that satisfy the constraint \eqref{eq:safety_condition_each_Zt}.  


Both additive modification and conditioning structures are commonly used in the safe control of deterministic systems (see~\cite[subsection II-B]{chern2021safecontrol} and references therein). These existing methods are designed to find control actions so that the vector field of the state does not point outside of the safe set around its boundary. In other words, the value of the barrier function will be non-decreasing in the infinitesimal future outlook time horizon whenever the state is close to the boundary of the safe set. 
However, such myopic decision-making may not account for the fact that different directions of the tangent cone of the safe set may lead to vastly different long-term safety. In contrast, the proposed control policies \eqref{eq:additive_modification_policy} and \eqref{eq:conditioning} account for the long-term safe probability in $\sprob$, and are guaranteed to steer the state toward the direction with non-decreasing long-term safe probability when the tolerable long-term unsafe probability is about to be violated. When $\sprob$ is defined based on the closed-loop system involving \eqref{eq:x_trajectory} and \eqref{eq:nominal_controller}, its value can be computed offline. In such cases, the controller only needs to myopically evaluate the addition \eqref{eq:additive_modification_policy} or closed-form inequality conditions \eqref{eq:conditioning} in real time execution. In both cases, the computation efficiency is comparable to common myopic barrier function-based methods in a deterministic system.

\subsection{Improving the accuracy of gradient estimation}
\label{SS:Accurate_Gradient}


The safety condition \eqref{eq:safety_condition_each_Zt} requires us to evaluate $\sprob$, $\partial \sprob / \partial z$, and $\Hess \sprob$. These values can be estimated by applying Monte-Carlo methods on finite difference approximation formulas. However, for some systems and parameter ranges, naive sampling can produce noisy estimate of the probabilities and their gradients~\cite{chern2021safecontrolatcdc}. At a high spatial frequency, the randomness due to sampling can have a relatively larger impacts than the infinitesimal changes in the initial state. 

Such drawback in naive sampling can be complemented using additional information about the conditions that must be satisfied by the probabilities and their gradients. Here, we derive the safe/recovery probabilities as the solution to certain convection diffusion equations. The solution of the convection diffusion equations are guaranteed to be smooth and satisfy the neighbor relations of probabilities (see~\cite[Section 7.1]{evans1997partial} and reference therein for the regularity of convection diffusion). Such characterization allows the well-established numerical analysis techniques to be used to improve the accuracy of these estimates~\cite{leveque2002finite}.

Below, we present the convection diffusion equations. To emphasize the qualitatively different roles of $T_t$ and $(\varmargin_t,\phi(X_t),X_t)$, we introduce another state variable
    \begin{align}
        Y_t:=\begin{bmatrix}
    \varmargin_t \\
    \phi(X_t)\\
    X_t
    \end{bmatrix}\in\R^{n+2}.
    \end{align}

\begin{theorem}
\label{thm:pde_relations_theorem}
Let $S = \tilde{\sigma}\tilde{\sigma}^\intercal$. 
Let $\rho=\tilde{f}+\tilde{g}N$ if $\sprob$ in \eqref{eq:cases_summary} is defined for the closed-loop system of \eqref{eq:x_trajectory} and \eqref{eq:nominal_controller}, and $\rho=\tilde{f}+\tilde{g}K_N$ if $\sprob$ is defined for the closed-loop system of \eqref{eq:x_trajectory} and \eqref{eq:generic_controller}. The variable $\varsprob(Y_t,T_t):=\sprob(Z_t)$ for types I-IV satisfies the following convection diffusion equation~\cite[Theorems~1--4]{chern2021safecontrolatcdc}:
\begin{align}
\label{eq:pde_generic_form}
    \frac{\partial \varsprob}{\partial T} = \frac{1}{2}\grad\cdot(S\grad \varsprob) + \gL_{\rho - \frac{1}{2}\grad\cdot S}\varsprob,\ y[2]\geq y[1], T>0.
\end{align}
For types I and II, the boundary condition satisfies
\begin{align*}
    \label{eq:CauchyProblem_I_II}
        \begin{cases}
         \varsprob(y,T) = 0,& y[2]< y[1], T>0,\\
         \varsprob(y,0) = \mathbb{1}{\{y[2]\geq y[1]\}}(y),&y\in\R^{n+2}.
        \end{cases} \eqnumber
\end{align*}
For types III and IV, the boundary condition satisfies
\begin{align*}
    \label{eq:CauchyProblem_III_IV}
        \begin{cases}
         \varsprob(y,T) = 1,& y[2]< y[1], T>0,\\
         \varsprob(y,0) = \mathbb{1}{\{y[2]\geq y[1]\}}(y),&y\in\R^{n+2}.
        \end{cases} \eqnumber
\end{align*}
\end{theorem}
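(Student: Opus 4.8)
The plan is to reduce \cref{thm:pde_relations_theorem} to the standard Kolmogorov-backward (Feynman--Kac) description of first-passage probabilities for the augmented diffusion $Y_t$. First I would note that $Y_t=(\varmargin_t,\phi(X_t),X_t)$ is itself an It\^o diffusion: deleting the deterministic $T_t$-coordinate from \eqref{eq:z_t_dyn} and substituting the chosen closed-loop control ($U_t=N(X_t)$ or $U_t=K_N(X_t,L_t,T_t)$) gives $dY_t=\rho(Y_t)\,dt+\hat{\sigma}(Y_t)\,dW_t$, where $\rho$ is the last $n+2$ components of $\tilde{f}+\tilde{g}N$ (resp.\ $\tilde{f}+\tilde{g}K_N$) and $\hat{\sigma}\hat{\sigma}^\intercal=S$; the $T_t$-coordinate and its drift $f_T$ are irrelevant here, entering only through the outer-time evolution of $\sprob(Z_t)$ that is the content of \cref{lm:main_lemma}. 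Writing $z=(T,y)$ so that $\varsprob(y,T)=\sprob(Z_t)$, each quantity in \eqref{eq:cases_summary} is, by the definitions of $\minf,\exit,\maxf,\entrance$ and the identities \eqref{eq:safe_prob_invariant} and \eqref{eq:safe_prob_convergent}, a first-passage functional of $Y_t$ relative to $\gR:=\{y:y[2]\geq y[1]\}$: for types I and II, $\varsprob(y,T)=\mP_y\!\left(Y_s\in\gR\ \forall s\in[0,T]\right)$ (not yet having exited $\gR$ by time $T$); for types III and IV, $\varsprob(y,T)=\mP_y\!\left(\exists s\in[0,T]:Y_s\in\gR\right)$ (having reached $\gR$ by time $T$).

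The derivation then proceeds in three steps. \emph{(i) The parabolic equation.} By the Markov property of $Y_t$ (Chapman--Kolmogorov), for $y$ in the interior of the region on which $\varsprob$ is not already pinned to a constant and for small $h>0$, the probability that $Y$ crosses $\{y[2]=y[1]\}$ during $[0,h]$ is $o(h)$, whence $\varsprob(y,T+h)=\mE_y[\varsprob(Y_h,T)]+o(h)$; dividing by $h$, letting $h\to0$, and using the infinitesimal generator of Section~\ref{S:Preliminary} together with It\^o's Lemma gives $\partial\varsprob/\partial T=\gA\varsprob$ with $\gA u=\gL_\rho u+\frac12\tr(S\,\Hess u)$. \emph{(ii) Divergence form.} The identity $\frac12\grad\cdot(S\grad u)=\frac12\tr(S\,\Hess u)+\frac12(\grad\cdot S)\cdot\grad u$, where $\grad$ is the gradient in the $n+2$ variables $y$, rewrites $\gA u$ as $\frac12\grad\cdot(S\grad u)+\gL_{\rho-\frac12\grad\cdot S}u$, which is exactly \eqref{eq:pde_generic_form}. \emph{(iii) Boundary and initial data.} These are read off directly from the probabilistic meaning: at $T=0$ the survival/hitting event reduces to $\{\phi(x)\geq y[1]\}$, giving $\varsprob(y,0)=\mathbb{1}\{y[2]\geq y[1]\}$; on the complementary region the event is already decided, so $\varsprob$ equals the constant $0$ for types I and II (the exit has occurred) and the constant $1$ for types III and IV (the target has been reached), matching the stated boundary conditions.

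The step I expect to be the \emph{main obstacle} is making (i) rigorous, because $Y_t$ is a \emph{degenerate} diffusion --- the margin coordinate carries no noise and $\phi(X_t)$ is a deterministic function of $X_t$ --- so $S$ is only positive semidefinite and uniformly-parabolic PDE regularity cannot be quoted off the shelf. I would handle this as in the companion work: assume enough regularity on $f,g,\sigma,N,\phi,f_\ell$ that It\^o's Lemma applies to $\varsprob(Y_t,T-t)$ (which in fact requires $\varsprob\in C^{2,1}$ on the open region --- hence the hypothesis that $\varsprob$ is continuously differentiable, and the appeal to the regularity of convection--diffusion solutions in Section~\ref{SS:Accurate_Gradient}); then $M_t:=\varsprob\!\left(Y_{t\wedge\tau_\gR},\,T-(t\wedge\tau_\gR)\right)$, $t\in[0,T]$, is a bounded martingale, equating its drift to zero yields \eqref{eq:pde_generic_form} on the open region, and optional stopping at $\tau_\gR$ recovers the boundary values. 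In practice, rather than redo this I would simply invoke \cite[Theorems~1--4]{chern2021safecontrolatcdc} for the augmented diffusion $Y_t$ with drift $\rho$ and diffusion matrix $S$, after verifying that its regularity hypotheses hold for the closed loop of \eqref{eq:x_trajectory} and the chosen controller; the present theorem is precisely that result specialized to the four quantities in \eqref{eq:cases_summary}.
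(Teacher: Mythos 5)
Your proposal is correct and matches the paper's treatment: the paper offers no independent proof of \cref{thm:pde_relations_theorem}, simply invoking \cite[Theorems~1--4]{chern2021safecontrolatcdc}, which is exactly where your argument ultimately lands. Your intermediate sketch --- the backward Kolmogorov equation for the augmented diffusion $Y_t$, the algebraic rewriting $\frac{1}{2}\tr(S\,\Hess{u})+\gL_{\rho}u=\frac{1}{2}\grad\cdot(S\grad u)+\gL_{\rho-\frac{1}{2}\grad\cdot S}u$, the Cauchy and boundary data read off from the first-passage interpretation, and the flag that the degeneracy of $S$ is the genuine technical obstacle --- is an accurate account of how the cited result is obtained and specialized to the four quantities in \eqref{eq:cases_summary}.
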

\vspace{5mm}

The methods to compute the values of $\sprob$, $\partial \sprob / \partial z$, and $\Hess \sprob$ are thorough and diverse. The characterization from Theorem~\ref{thm:pde_relations_theorem} can be exploited for improve the computation accuracy and efficiency. Examples of such techniques (non-mutually exclusive) are:
\begin{itemize}[leftmargin=*]
    \item Directly run Monte Carlo for neighboring states and approximate the gradient using finite difference methods.
    \item Evaluate the values of a boundary, and diffusing the boundary values to the interior/remaining areas. The boundary can be defined by the boundary condition given in \eqref{eq:CauchyProblem_I_II} or \eqref{eq:CauchyProblem_III_IV}. It can also be certain areas in \eqref{eq:pde_generic_form}, whose values can be evaluated using the MC method. 
    \item Use the relation in \eqref{eq:pde_generic_form} to derive the subspace that must be satisfied by $\sprob( z )$ and its neighbors $\sprob( z + \Delta z )$. This relation can be used to smooth out the results from the MC method: \eg the obtained probability can be projected onto the lower-dimensional subspace defined by \eqref{eq:pde_generic_form}. 
    \item Use condition \eqref{eq:pde_generic_form} to further derive the conditions that must be satisfied by $\partial \sprob / \partial z$ and $\Hess \sprob$. 
\end{itemize}

A review on the available methods and their tradeoffs is beyond the scope of this paper. The proposed approach do not constrain the computation of $\sprob$, $\partial \sprob / \partial z$, and $\Hess \sprob$ to be limited to any specific methods. 

\section{Example Use Case}
In this section, we show the efficacy of our proposed method in an example use case.
\subsection{Algorithms for comparison}
We compare our proposed controller with three existing safe controllers designed for stochastic systems. Below, we present their simplified versions.
\begin{itemize}[leftmargin=*]
    \item Proposed controller: The safety condition is given by
    \begin{align}
        D_\sprob(Z_t, U_t) \geq - \alpha (\sprob(Z_t) - (1-\epsilon)), 
    \end{align}
    where $\alpha > 0$ is a constant. We choose type I in \eqref{eq:cases_summary} with fixed time horizon and time-invariant zero margin, \ie $\mP \left(\minf_{X_t}(H) \geq 0 \right)$.
    \item Stochastic control barrier functions (StoCBF) \cite{clark2019control}: The safety condition is given by 
    \begin{align}
        \label{eq:StoCBF safe constraint}
        D_\phi(X_t, U_t) \geq - \eta \phi(X_t),
    \end{align}
    where $\eta > 0$ is a constant.
    Here, the mapping $D_\phi:\R^{n}\times\R^m \rightarrow \R$ is defined as the infinitesimal generator of the stochastic process $X_t$ acting on the barrier function $\phi$, \ie
    \begin{align}
    \label{eq:infgen_to_phi}
        \begin{split}
            D_\phi (X_t,U_t) := &A\phi (X_t) \\
            = &\gL_{f}\phi(X_t)+\gL_{g}\phi(X_t) U_t \\ 
            &+\frac{1}{2} \text{tr} \left(\left[\sigma(X_t)\right]\left[\sigma(X_t)\right]^\intercal\Hess \phi(X_t)\right).
        \end{split}
    \end{align} 
    
    This condition constrains the average system state to move within the tangent cone of the safe set. 
    \item Probabilistic safety barrier certificates (PrSBC) \cite{luo2019multi}: The safety condition is given by
    \begin{align}
        \label{eq:PrSBC safe constraint}
        \textbf{P}\left(D_\phi(X_t, U_t) + \eta \phi(X_t) \geq 0\right) \geq 1 - \epsilon,
    \end{align}
    where  $\eta > 0$ is a constant. This condition constrains the state to stay within the safe set in the infinitesimal future interval with high probability. 
    \item Conditional-value-at-risk barrier functions (CVaR) \cite{ahmadi2020risk}: The safety condition is given by
    \begin{align}
        \label{eq:CVaR safe constraint}
        \text{CVaR}_\beta \left(\phi(X_{t_{k+1}})\right) \geq \gamma \phi(X_{t_k})
    \end{align}
    where $\gamma \in (0,1)$ is a constant, $\{t_0 = 0, t_1, t_2,\cdots\}$ is a discrete sampled time of equal sampling intervals. 
    This is a sufficient condition to ensure the value of $\text{CVaR}^{k}_\beta(\phi(X_{t_k}))$ conditioned on $X_0 = x$ to be non-negative at all sampled time $t_{k \in \mathbb{Z}_+}$. The value of $\text{CVaR}^{k}_\beta(\phi(X_{t_k}))$ quantifies the evaluation made at time $t_0 = 0$ about the safety at time $t_k$.
\end{itemize}

\subsection{Settings}
We consider the control affine system (\ref{eq:x_trajectory}) with $f(X_t) \equiv A = 2$, $g(X_t) \equiv 1$, $\sigma(X_t) \equiv 2$. 
The safe set is defined as
\begin{align}
\begin{split}
    \gC(0) = \left\{x \in \R^n : \phi(x) \geq 0 \right\}, 
\end{split}
\end{align}
with the barrier function $\phi(x) := x-1$. The safety specification is given as the forward invariance condition. 
The nominal controller is a proportional controller $N(X_t) = -K X_t$ with $K = 2.5$. The closed-loop system with this controller has an equilibrium at $x=0$ and tends to move into the unsafe set in the state space.
We consider the following two settings: 
\begin{itemize}[leftmargin=*]
    \item \textbf{Worst-case safe control:} We use the controller that satisfies the safety condition with equality at all time to test the safety enforcement power of these safety constraint. Such control actions are the riskiest actions that are allowed by the safety condition. The use of such control actions allows us to evaluate the safety conditions separated from the impact of the nominal controllers. Here we want to see whether our proposed controller can achieve non-decreasing expected safety as intended.
    \item \textbf{Switching control:} We impose safe controller only when the nominal controller does not satisfy the safety constraint. Here we want to see how the proposed controller performs in practical use, where typically there is a control goal that is conflicting with safety requirements.
\end{itemize}

We run simulations with $d t = 0.1$ for all controllers. The initial state is set to $x_0 = 3$. For our controller, each Monte Carlo approximation uses $10000$ sampled trajectories. The parameters used are listed in Table \ref{tb:parameter list}. Since the parameter $\alpha$ in the proposed controller has a similar effect as $\eta$ in StoCBF and PrSBC, we use the same values for these parameters in those controllers. The parameter $\epsilon$ is the tolerable probability of unsafe events both in the proposed controller and PrSBC, so we use the same values of $\epsilon$ for both algorithms for a fair comparison. 

\begin{table}
\caption{parameters used in simulation}
\vspace{-1mm}
\label{tb:parameter list}
\begin{center}
\begin{tabular}{ |c|c| } 
\hline
\textbf{Controller} & \textbf{Parameters}  \\
\hline
\hline
Proposed controller & $\alpha = 1$, $\epsilon = 0.1$, $H=10$ \\
\hline
StoCBF & $\eta = 1$ \\
\hline
PrSBC & $\eta = 1$, $\epsilon = 0.1$ \\
\hline
CVaR & $\gamma = 0.65$, $\beta = 0.1$ \\
\hline
\end{tabular}
\end{center}
\end{table}


\subsection{Results}
Fig.~\ref{fig:worst_case} shows the results in the worst-case setting. The proposed controller can keep the expected safe probability $\mE[\sprob(X_t)]$ close to $0.9$ all the time, while others fail to keep it at a high level with used parameters. A major cause of failure is due to the accumulation of rare event probability, leading to unsafe behaviors. This shows the power of having a provable performance for non-decreasing long-term safe probability over time. For comparable parameters, the safety improves from StoCBF to PrSBC to CVaR. This is also expected as constraining the expectation has little control of higher moments, and constraining the tail is not as strong as constraining the tail and the mean values of the tail. 

\begin{figure}[hptb]
	\centering
	\begin{subfigure}{.2395\textwidth}
	    \includegraphics[width=\textwidth]{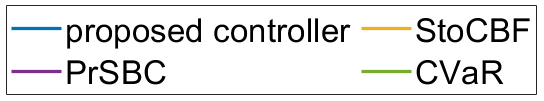}
	\end{subfigure}
	
	\begin{subfigure}{.2395\textwidth}
		\includegraphics[width=\textwidth]{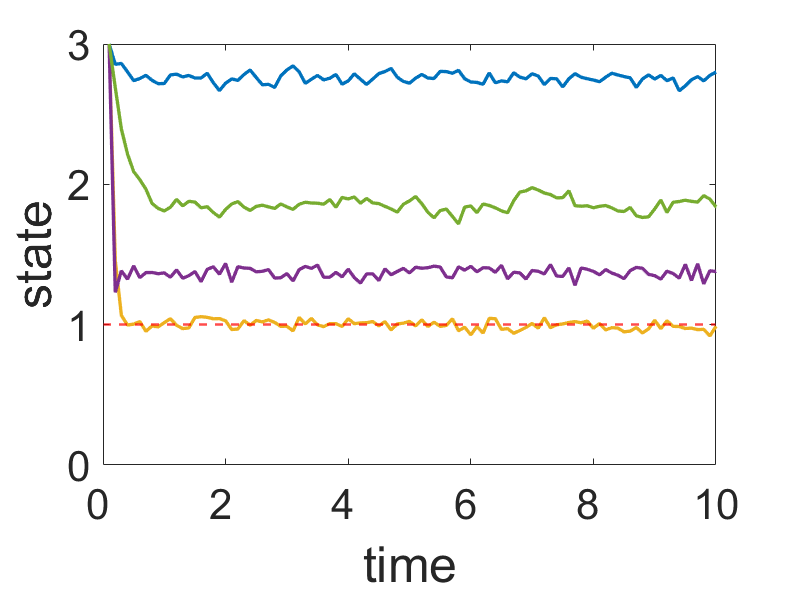}
        \caption{average state}
		\label{fig:worst-case state}
	\end{subfigure}
	\begin{subfigure}{.2395\textwidth}
		\includegraphics[width=\textwidth]{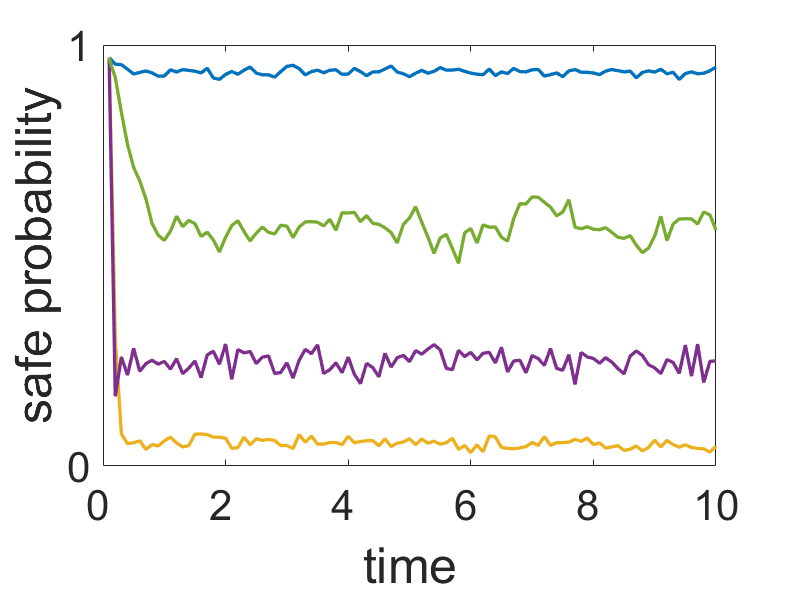}
        \caption{expected safe probability}
		\label{fig:worst-case safe prob}
	\end{subfigure}
	\caption{Results in the worst-case setting where \textbf{(a)} shows the average system state over 50 trajectories and \textbf{(b)} shows the expected safe probability \eqref{eq:FI-probability}.}
	\label{fig:worst_case}
\end{figure}

Fig.~\ref{fig:switch control} shows the results in the switching control setting. We obtained the empirical safe probability by calculating the number of safe trajectories over the total trials. In this setting, the proposed controller can keep the state within the safe region with the highest probability compared to other methods, even when there is a nominal control that acts against safety criteria. This is because the proposed controller directly manipulates dynamically evolving state distributions to guarantee non-decreasing safe probability when the tolerable unsafe probability is about to be violated, as opposed to when the state is close to an unsafe region. Our novel use of forward invariance condition on the safe probability allows a myopic controller to achieve long-term safe probability, which cannot be guaranteed by any myopic controller that directly imposes forward invariance on the safe set.


\begin{figure}[hptb]
	\centering
	\begin{subfigure}{.2395\textwidth}
	    \includegraphics[width=\textwidth]{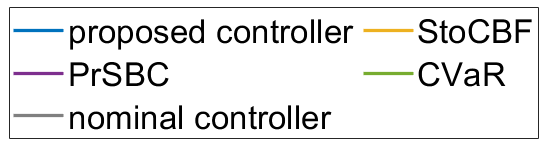}
	\end{subfigure}
	
	\begin{subfigure}{.2395\textwidth}
		\includegraphics[width=\textwidth]{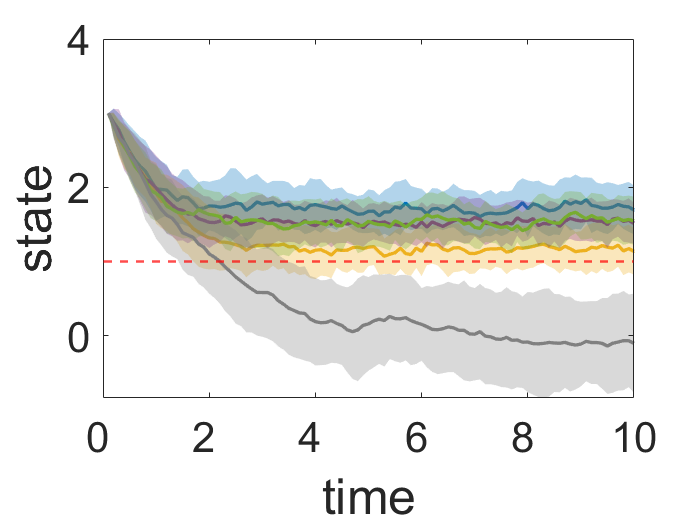}
		\caption{average state}
		\label{fig:switching state}
	\end{subfigure}
	\begin{subfigure}{.2395\textwidth}
		\includegraphics[width=\textwidth]{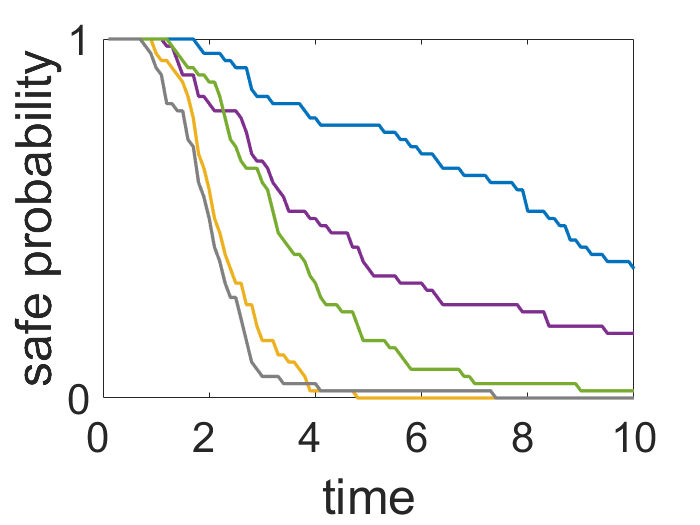}
		\caption{empirical safe probability}
		\label{fig:switching safe prob}
	\end{subfigure}
	\caption{Results in the switching control setting where \textbf{(a)} shows the averaged system state of 50 trajectories with its standard deviation and \textbf{(b)} shows the empirical safe probability.}
	\label{fig:switch control}
\end{figure}

\section{Conclusion}

In this paper, we considered the problem of ensuring long-term safety with high probability in stochastic systems. We proposed a sufficient condition to control the long-term safe probability of forward invariance (staying within the safe region) and forward convergence (recovering to the safe region). We then integrated the proposed sufficient condition into a myopic controller which is computationally efficient. We additionally outline possible techniques to improve the computation accuracy and efficiency in evaluating the sufficient condition. 
Finally, we evaluated the performance of our proposed controller in a numerical example. Although beyond the scope of this paper, the proposed framework can also be used to characterize the speed and probability of system convergence and may be useful in finite-time Lyapunov analysis in stochastic systems.

\end{document}